\documentclass{amsart}
\usepackage{graphicx} 
\usepackage{amssymb}
\usepackage{amsfonts}
\usepackage{amsmath}
\usepackage{amsrefs}
\usepackage{amsthm}
\usepackage{array}
\usepackage{graphicx}
\usepackage{graphics}
\usepackage{framed}
\usepackage{amsxtra}
\usepackage{tabularx}
\usepackage{lipsum}
\usepackage{mathtools}
\usepackage{enumerate}
\usepackage{hyperref}
\usepackage{setspace}
\usepackage{multicol}
\usepackage{comment}
\usepackage{xcolor}
\usepackage{thmtools}
\usepackage{nameref}
\usepackage{listings}
\usepackage{longtable,booktabs,array}
\usepackage[top=1in, bottom=1in, left=1in, right=1in]{geometry}
\usepackage{tikz}
\usepackage{cleveref}
\usetikzlibrary{matrix,decorations.pathreplacing,calc,positioning,backgrounds}

\theoremstyle{plain} 
\newtheorem{theorem}{Theorem}[section]
\newtheorem{lemma}[theorem]{Lemma}

\newtheorem{proposition}[theorem]{Proposition}

\newtheorem{construct}[theorem]{Construction}
\theoremstyle{definition}
\newtheorem{definition}[theorem]{Definition}

\newtheorem{exam}[theorem]{Example}

\newtheorem{remark}[theorem]{Remark}

\DeclareMathOperator{\reg}{reg}
\DeclareMathOperator{\supp}{supp}
\DeclareMathOperator{\lcm}{lcm}
\DeclareMathOperator{\opb}{b}
\DeclareMathOperator{\pd}{pd}
\usepackage{graphicx} 

\title{Minimal Resolutions of $k$-divisible and $(k-1)$-dominant Ideals}
\author{Patrick Welte}
\address{Department of Mathematics and Statistics, Binghamton University}
\email{pwelte@binghamton.edu}

\author{Guillermo Alesandroni}
\address{Mathematics and Statistics Department, California State University, Chico}
\email{galesandroni@csuchico.edu}

\begin{document}
\maketitle

\begin{abstract}
    We introduce invariants we call $k$-divisibility and $d$-dominance, which are combinatorial constraints on the minimal generators of monomial ideals. We then prove that $k$ (respectively, $d$) is an upper bound (respectively, lower bound) for the projective dimension of a $k$-divisible (respectively, $d$-dominant) monomial ideal. This leads us to an explicit minimal resolution of $k$-divisible, $d$-dominant ideals when $d=k-1$ via a Lyubeznik resolution. This framework allows us to prove a conjecture made by Alesandroni. Using these invariants, we also provide a characterization of monomial ideals whose minimal free resolution is attainable from the Taylor resolution by performing exactly one consecutive cancellation. We also prove that for all $q,k,d$ in the range $1\leq d<k \leq q-1$, there exist monomial ideals that are $k$-divisible and $d$-dominant with $q$ minimal generators.
\end{abstract}

\section{Introduction to $k$-divisibility and $d$-dominance}

We begin with the definitions of $k$-divisibility and $d$-dominance, with an example illustrating some finer details that will be important later. Section $2$ provides some background on a few concepts and facts from commutative and homological algebra we required; these include most importantly the definitions of Taylor and Lyubeznik resolutions. Section $3$ is where we prove that if an ideal satisfies the combinatorial property of being $k$-divisible (respectively, $d$-dominant), then $k$ is an upper bound (respectively, $d$ is a lower bound) on the projective dimension of the ideal. Here we also derive the minimal resolutions of the $k$-divisible, $(k-1)$-dominant ideals, resolve a conjecture of Alesandroni, and characterize the so-called $1$-cancellation ideals in terms of our invariants. In section $4$ we offer a construction of $k$-divisible, $d$-dominant ideals for each suitable $k,d$. Throughout our writing, $M$ will be a monomial ideal of $S=\mathbb{F}[x_1, \cdots, x_n]$, where $\mathbb{F}$ is an arbitrary field, and projective dimension and regularity are applied to $S/M$. Finally, when we write $M=(m_1,...,m_q)$ we mean that the minimal generating set of the ideal $M$ is exactly $\{m_1,...,m_q\}$. 

\begin{definition}
    For $1\leq k \leq q$, we say that $M=(m_1,...,m_q)$ is $\mathbf{k}$\textbf{-divisible} if it has a minimal generator that divides the least common multiple of every $k$ minimal generators. That is, $M$ is $k$-divisible if there exists $m \in \{m_1,...,m_q\}$ such that $m|\lcm\{m_{i_1},...,m_{i_k}\}$ for every $k$-set $\{m_{i_1},...,m_{i_k}\} \subset \{m_1,...,m_q\}$. Such an $m$ will be called a \textbf{witness} of the $k$-divisibility of $M$.
\end{definition}

\begin{definition}
   For $1\leq d \leq q-1$, we say that $M=(m_1,...,m_q)$ is $\mathbf{d}$\textbf{-dominant} if no minimal generator divides the lcm of $d$ others. That is, $M$ is $d$-dominant if for every $(d+1)$-set $\{m_{j_1},...,m_{j_{d+1}}\} \subset \{m_1,...,m_q\}$ and every $m\in \{m_{j_1},...,m_{j_{d+1}}\}$, $m\nmid \lcm( \{m_{j_1},...,m_{j_{d+1}}\} \setminus\{m\})$.
\end{definition}

Observe that if an ideal $M$ is $k$-divisible and $d$-dominant, then for each $k'$ and $d'$ satisfying $k< k' \leq q$ and $1\leq d'<d$, $M$ is also $k'$-divisible and $d'$-dominant. This fact motivates the definitions of the following invariants: $\kappa(M)= \min\{k \in \mathbb{N}: M \text{ is } k \text{-divisible}\}$ and $\delta(M)=\max\{d \in \mathbb{N}: M \text{ is } d \text{-dominant}\}$ \textemdash{} we use this notation later in section $4$ when we construct ideals with these properties. Notice that $M$ is always $q$-divisible with any generator serving as a witness, so $\kappa(M)$ is well-defined. Likewise, $\delta(M)$ is well-defined since $M$ must be at least $1$-dominant by the definition of a minimal generating set of a monomial ideal. We make one more remark on how $\kappa(M)$ and $\delta(M)$ relate to each other. If $M$ is $k$-divisible and $(k-1)$-dominant, then $\kappa(M)=k$ and $\delta(M)=k-1$. If $M$ were $(k-1)$-divisible, then there must exist $m \in \{m_1,...,m_q\}$ such that $m$ divides the lcm of every $(k-1)$-collection of generators. But this clearly contradicts $(k-1)$-dominance, which requires that no generator can divide the lcm of any $(k-1)$-collection of other generators. This means that $k$ is the least natural number for which $M$ is $k$-divisible, that is $\kappa(M)=k$ (it is trivial to check also that $\delta(M)=k-1$). Finally, notice that $d$-dominance is not simply the negation of $d$-divisibility. A monomial ideal fails to be $d$-divisible when it has no minimal generator that divides the lcm of every $d$-collection but $d$-dominance is stronger, requiring that no minimal generator divides the lcm of any $d$-collection of other generators. These notions may come apart. Consider the ideal, $M=(xy,xu,yv,z)$. No generator divides the lcm of every pair of generators, hence $M$ is not $2$-divisible. But $xy|\lcm(xu,yv)$, and so $M$ is also not $2$-dominant. We work through a similar example in more detail to further illustrate these ideas.

\begin{exam}
    Define $M=(xyz,xu,yv,zw)$. Since $xyz|\lcm(xu,yv,zw)$, we say that $M$ is $3$-divisible. Observe that $xyz$ does not divide the lcm of any pair of generators from the other three $xu,yv,zw$ and, similarly, none of $xu,yv,zw$ divide the lcm of any two other generators from $xyz,xu,yv,zw$ since each possesses a unique variable $p_i$. Hence, $M$ is $2$-dominant and since $M$ is $3$-divisible, it follows that $\kappa(M)=3$ and $\delta(M)=2$.
\end{exam}

\section{Commutative algebra background}

The definitions here are concise forms of Mermin's definitions in \cite{Merm}. We have left out some unnecessary details, but interested readers should study the excellent introduction to these resolutions in full in Mermin's paper. Throughout, we identify subsets of $G$ with faces of $\Delta_M$, using these words synonymously.

\begin{definition}
    For a subset $\sigma \subset G$, set $\lcm(\sigma)= \lcm\{m_i: m_i \in \sigma\}$. For each such $\sigma$, define a formal symbol $[\sigma]$, called a Taylor symbol, with multidegree equal to $\lcm(\sigma)$. For each $i$, set $T_i$ equal to the free $S$-module with basis $\{[\sigma]:|\sigma|=i\}$ given by the symbols corresponding to subsets of size $i$. Define $\phi_{-1}: T_{0}\xrightarrow{}S/M$ by $\phi_{-1}(f[\emptyset])=f$. Otherwise,  we construct $\phi_i: T_{i+1}\xrightarrow{}T_i$ as follows. Given $\sigma=\{m_{j_1},...,m_{j_i}\}$ and $\tau = \sigma \setminus \{m_{j_t}\}$, we set the sign $\epsilon_\tau^\sigma$ equal to $1$ if $t$ is odd and $-1$ if $t$ is even. We call such an $\tau$ a facet of $\sigma$. Finally, we set : $$\phi_\sigma = \sum_{\tau \text{  a facet of } \sigma}\epsilon_\tau^\sigma \frac{\lcm(\sigma)}{\lcm(\tau)}[\tau]$$ and define $\phi_i:T_{i+1} \xrightarrow{}T_i$ by extending the various $\phi_\sigma$. The Taylor resolution of $M$ is the complex $$\mathbb{T}_M: 0 \xrightarrow{}T_q \xrightarrow{\phi_{q-1}}\cdots \xrightarrow{\phi_1}T_1 \xrightarrow{\phi_0}T_0 \xrightarrow{}S/M \xrightarrow{}0$$
\end{definition}

\begin{definition}
    Fix any total ordering $<$ on the minimal generators of $M$: $m_{i_1}<...<m_{i_q}$. For a monomial $u$, define $\min(u)=\min_<(m_i: m_i \text{ divides } u)$ and for each $\sigma \in \Delta_M$ let $\min(\sigma)=\min(\lcm(\sigma))$ -- notice that $\min(\sigma)$ is a monomial. We say that $\sigma$ is rooted if every non-empty subset $\tau \subset \sigma$ satisfies $\min(\tau) \in \tau$ (in particular, $\min(\sigma) \in \sigma$). By construction the set $\Lambda_{M,<}=\{ \sigma \in \Delta_M : \sigma \text{ is rooted }\}$ is a simplicial complex; we call it the Lyubeznik simplicial complex associated to $M$ and $<$. The associated algebraic chain complex $\mathbb{L}_{M,<}$ is called a Lyubeznik resolution of $M$. Suppose $M=(m,m_1,...,m_{q-1})$ is $k$-divisible with witness $m$. Later we will examine a ``witness-first" Lyubeznik resolution, which is simply a resolution associated with any total ordering of the minimal generators that has $m$ as the least element. Given a witness-first ordering $<$, we denote the associated Lyubeznik resolution by $\mathbb{L}_{M,<}$. We will shorten this by writing just $\mathbb{{L}_M}$ to mean the witness-first resolution (when each witness-first resolution is isomorphic). That is, if, once a witness generator is fixed as the least element, the associated resolution is independent of the ordering of the remaining generators, then we use $\mathbb{{L}_M}$ to indicate this resolution. This rather technical point will become clearer after \cref{thm:big-lyub}.
\end{definition}

In general the Taylor resolution of a monomial ideal is not the minimal resolution; indeed, it is minimal if and only if the ideal is ``dominant" as shown in \cite{Ales3} (we will return to the notion of dominant ideals later). However, starting from the Taylor resolution we can always obtain the minimal resolution of a monomial ideal by performing so-called consecutive cancellations to remove redundant basis elements of the resolution. We make an informal definition below. Consecutive cancellation is a standard procedure and greater treatment is beyond our scope. For more details see \cite{Peeva}.

\begin{definition}
    Given $i \in \{1,2,...,r\}$, if $[m_1,...,m_r]$ and $[m_1,...,\widehat{m_i},...,m_r]$ are basis elements of a free resolution of $M$, then $[m_1,...,\widehat{m_i},...,m_r]$ is called a facet of the face $[m_1,...,m_r]$. By consecutive cancellation, we mean the process simplifying a free resolution of $M$ by removing a face and facet of equal multidegree in adjacent homological degree. It is known by \cite{Ales3} and \cite{Peeva} that the minimal resolution of a monomial ideal $M$ can be obtained from $\mathbb{T}_M$ by performing sequentially all possible consecutive cancellations. 
\end{definition}

\begin{definition}
    A Scarf face is a subset $A$ of the minimal generating set of $M$ such that $\lcm(A)$ is not achieved by any other face. That is, $A$ is Scarf if there is no face $B \neq A$ with $\lcm(B)=\lcm(A)$.
\end{definition}

\begin{lemma}\label{lem:scarf}
    If $M$ is $d$-dominant, then each set $\sigma\subset \{m_1,...,m_q\}$ with $1\leq|\sigma|\leq d$ is Scarf, and $\opb_j(S/M)=\binom{q}{j}$ for each $0\leq j \leq d$.
\end{lemma}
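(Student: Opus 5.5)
The plan has two parts. First I show combinatorially that small faces are Scarf. Then I read off the Betti numbers from the Taylor resolution using consecutive cancellation.

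\emph{Step 1: small faces are Scarf.} Let $\sigma$ satisfy $1\le|\sigma|\le d$, and suppose some face $B\neq\sigma$ has $\lcm(B)=\lcm(\sigma)$. There are two cases.
\begin{itemize}
\item If $B\not\subset\sigma$, choose $m\in B\setminus\sigma$. Then $m$ divides $\lcm(B)=\lcm(\sigma)$, so $m$ divides the lcm of at most $d$ generators other than itself. Pad $\sigma$ with further generators if needed; this only enlarges the lcm, and is possible since $d\le q-1$. The result is a $(d+1)$-set containing $m$ in which $m$ divides the lcm of the others, contradicting $d$-dominance.
\item If $B\subsetneq\sigma$, choose $m\in\sigma\setminus B$. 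Then $m\mid\lcm(B)$, and $B\subset\sigma\setminus\{m\}$ has at most $d-1$ elements. Padding again gives the same contradiction.
\end{itemize}
The empty face is handled trivially, since only $\emptyset$ has lcm $1$. Hence every such $\sigma$ is Scarf.

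\emph{Step 2: Betti numbers.} Recall that $T_j$ has basis $\{[\sigma]:|\sigma|=j\}$, so it has rank $\binom{q}{j}$. The minimal resolution is obtained from $\mathbb{T}_M$ by consecutive cancellations, and each cancellation removes a pair of symbols of equal multidegree. A Scarf face has a multidegree shared by no other face, so it can never be cancelled. By Step 1, every symbol $[\sigma]$ with $|\sigma|\le d$ is therefore never cancelled. Since the cancellations only remove Taylor symbols, no others are created, so $\opb_j(S/M)=\binom{q}{j}$ for $0\le j\le d$.

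A cleaner alternative, which I would mention, is the multigraded count $\opb_{j,\mathbf a}=\dim \tilde H_{j-2}$ of the upper Koszul or lcm-lattice complex. For a Scarf multidegree this complex is the boundary of a simplex, so it contributes exactly one generator.

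\emph{Main obstacle.} The delicate point is the padding argument in Step 1, where the definition of $d$-dominance is stated for $(d+1)$-sets. I need to check that divisibility by the lcm of at most $d$ others implies divisibility by the lcm of exactly $d$ others, using $q\ge d+1$. The same point arises in the subset case when $|B|<d$. The cancellation bookkeeping in Step 2 is routine given the cited facts from \cite{Ales3} and \cite{Peeva}.
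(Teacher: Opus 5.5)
Your proposal is correct and follows essentially the same route as the paper. You use the same two cases ($B\setminus\sigma\neq\emptyset$ versus $B\subsetneq\sigma$), each producing a generator that divides the lcm of at most $d$ others, and then observe that Scarf faces cannot take part in any cancellation. Your explicit padding step, which uses $d\le q-1$, is exactly what the paper compresses into the remark that $d$-dominance implies $t$-dominance for $t<d$.
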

\begin{proof}
    Let $\sigma\subset \{m_1,...,m_q\}$ be arbitrary and $1\leq|\sigma|\leq d$. Recall that $d$-dominance implies $t$ dominance for each $t \in \{1,2,...,d-1\}$. Suppose for a contradiction that another face $\tau\subset \{m_1,...,m_q\}$ with $\tau \neq \sigma$ satisfies $\lcm(\sigma)=\lcm(\tau)$. If $(\tau \setminus \sigma) \neq \emptyset$, then choose $a \in (\tau \setminus \sigma)$. Then $a|\lcm(\tau)=\lcm(\sigma)$. This is impossible by $d$-dominance. Now if $(\tau \setminus \sigma) = \emptyset$, then since $\tau \neq \sigma$ it must be that $|\tau|<d$. Then we may choose $a' \in (\sigma \setminus \tau)$, and then $a'|\lcm(\sigma)=\lcm(\tau)$, which is again impossible by $d$-dominance. 
    
    Either way results in a contradiction, hence $\sigma$ must be Scarf. Since each face in this range is Scarf, no consecutive cancellation in the Taylor resolution below homological degree $d$ is possible, hence $\opb_j(S/M)=\binom{q}{j}$ for each $0\leq j \leq d$.
\end{proof}

We will also use the terminology of Betti numbers, which we introduce below. Unless stated explicitly, we will always be referring to total Betti numbers.

\begin{definition}
    The total Betti numbers of $S/M$ are the ranks of the modules in the minimal free resolution of $S/M$. If $\pd(S/M)=n$, then there are exactly $n+1$ Betti numbers and the $i$'th is the rank of the corresponding $i$'th module in the minimal resolution. We indicate the $i$'th Betti number of $S/M$ by writing $\opb_i(S/M)$.
\end{definition}

\section{Minimal Resolutions of $k$-Divisible, $d$-Dominant Ideals}

The dominance and divisibility properties we have defined are related to minimal free resolutions. In particular, if $M$ is $d$-dominant and $k$-divisible, then the dominance property gives a lower bound on the projective dimension of $M$ and fixes the first $d$ Betti numbers and the divisibility property gives an upper bound on the projective dimension and $k$'th Betti number. We begin by showing that $k$ upper bounds the projective dimension of $k$-divisible monomial ideals by choosing an ordering of the generators that makes the witness of $k$-divisibility the least element. We agree to make the following convention for the rest of the paper: every free resolution $\mathbb{F}$ of $S/M$ is assumed to be obtained from $\mathbb{T}_M$ by consecutive cancellations; accordingly, basis elements are Taylor symbols, with homological degree $|A|$ and multidegree $\lcm(A)$.

\begin{lemma}\label{lem:root}
    Let $M=(m,m_1,...,m_{q-1})$ be $k$-divisible with witness $m$, and let $<$ be any witness-first ordering of the generators. If $\sigma$ is a rooted face, then $|\sigma|\leq k$. Moreover, if $|\sigma|=k$ then $m \in \sigma$.
\end{lemma}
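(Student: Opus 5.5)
The idea is to use the witness $m$ as a blocker: $m$ is the least element of the ordering and divides the lcm of every $k$-subset, so it will show up as $\min(\tau)$ for any large enough $\tau$. Recall that a face $\sigma$ is rooted when every nonempty $\tau\subseteq\sigma$ has $\min(\tau)\in\tau$, where $\min(\tau)$ is the $<$-least generator dividing $\lcm(\tau)$. The key observation is that for any $\tau$ with $|\tau|\geq k$ we have $m\mid\lcm(\tau)$. Indeed, choosing any $k$-subset $\tau'\subseteq\tau$, $k$-divisibility gives $m\mid\lcm(\tau')$, and $\lcm(\tau')$ divides $\lcm(\tau)$. Since $m$ is the least element of $<$, it follows that $\min(\tau)=m$ whenever $|\tau|\geq k$.

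First, the bound $|\sigma|\leq k$. Suppose $\sigma$ is rooted with $|\sigma|\geq k+1$. I will find a subset $\tau\subseteq\sigma$ with $|\tau|=k$ and $m\notin\tau$. If $m\notin\sigma$, take any $k$-subset of $\sigma$. If $m\in\sigma$, then $\sigma\setminus\{m\}$ still has at least $k$ elements, so take a $k$-subset of it. In either case, the observation gives $\min(\tau)=m\notin\tau$, contradicting rootedness. Hence $|\sigma|\leq k$.

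Next, suppose $|\sigma|=k$. By the observation, $\min(\sigma)=m$. Rootedness applied to $\tau=\sigma$ itself requires $\min(\sigma)\in\sigma$, so $m\in\sigma$.

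None of these steps is really an obstacle. The only points needing care are that divisibility of lcms passes from a subset to a superset, and that $m$, being a minimal generator and the least element, is automatically the minimum whenever it divides the lcm. The case $k=q$ is harmless: the first part is vacuous, and the second part forces $\sigma$ to be the full generating set, which contains $m$.
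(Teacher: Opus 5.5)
Your proof is correct and follows essentially the same route as the paper: the witness $m$, being least in the ordering, equals $\min(\tau)$ for every $\tau$ with $|\tau|\geq k$, and a subface avoiding $m$ then violates rootedness (you take a $k$-subset of $\sigma\setminus\{m\}$, while the paper uses $\sigma$ itself or $\sigma\setminus\{m\}$). Your explicit remark that $m\mid\lcm(\tau')$ for a $k$-subset $\tau'$ passes to every superset spells out a step the paper uses without comment.
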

\begin{proof}
    Suppose $\sigma$ is a face of size $>k$. If $m\notin \sigma$, then $k$-divisibility implies that $m|\lcm(\sigma)$, which means $\sigma$ must not be rooted. If $m\in \sigma$, define $\sigma'=\sigma \setminus\{m\}$ which means $\sigma=\sigma'\cup \{m\}$. Since we assumed $|\sigma|>k$, we know $|\sigma'|\geq k$. Now since $m$ is a witness of $k$-divisibility, $m|\lcm(\sigma')$. But since $m \notin \sigma'$, we know $\sigma'$ is not rooted, hence neither is $\sigma$. 

    Last, if $|\sigma|=k$ and $m \notin \sigma$, then $m|\lcm(\sigma)$ by the definition of $k$-divisibility. So here also, $\sigma$ is not rooted.
\end{proof}

\begin{theorem}\label{thm:kdiv-bound}
    If $M$ is $k$-divisible, then $\pd(S/M) \leq k$ and $\opb_k(S/M) \leq \binom{q-1}{k-1}$.
\end{theorem}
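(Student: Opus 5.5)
We use a witness-first Lyubeznik resolution together with \cref{lem:root}. Let $m$ be a witness of the $k$-divisibility of $M$, write $M=(m,m_1,\dots,m_{q-1})$, and fix any total ordering $<$ of the minimal generators in which $m$ is the least element. The Lyubeznik complex $\mathbb{L}_{M,<}$ is a free resolution of $S/M$. In homological degree $i$ it has basis the symbols $[\sigma]$ with $\sigma$ a rooted face of size $i$. It is a subcomplex of $\mathbb{T}_M$, so it fits the paper's convention: basis elements are Taylor symbols with multidegree $\lcm(\sigma)$.

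Every minimal free resolution of $S/M$ is a direct summand of any free resolution of $S/M$. Hence $\opb_i(S/M)\leq \operatorname{rank}(\mathbb{L}_{M,<})_i$ for every $i$, and $\pd(S/M)$ is at most the length of $\mathbb{L}_{M,<}$. By \cref{lem:root}, every rooted face has size at most $k$. So $(\mathbb{L}_{M,<})_i=0$ for $i>k$, which gives $\pd(S/M)\leq k$.

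For the Betti number bound, we count the rooted faces of size exactly $k$. By the second part of \cref{lem:root}, each such face contains $m$. It is therefore determined by a $(k-1)$-subset of $\{m_1,\dots,m_{q-1}\}$, so there are at most $\binom{q-1}{k-1}$ of them. Thus $\operatorname{rank}(\mathbb{L}_{M,<})_k\leq\binom{q-1}{k-1}$, and so $\opb_k(S/M)\leq\binom{q-1}{k-1}$.

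The only step needing care is justifying that the Lyubeznik complex is a free resolution and that it dominates the minimal resolution rank by rank. Both are standard facts (Lyubeznik's theorem, as presented in \cite{Merm}, and the direct-summand property of minimal resolutions, \cite{Peeva}), so we invoke them directly rather than reprove them. Everything else reduces to \cref{lem:root}.
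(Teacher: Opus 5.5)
Your proposal is correct and is essentially the paper's proof: a witness-first Lyubeznik resolution combined with both parts of \cref{lem:root}. The only difference is that you explicitly invoke the direct-summand property of minimal resolutions to pass from the ranks of $\mathbb{L}_{M,<}$ to the Betti numbers and $\pd(S/M)$, a step the paper leaves implicit.
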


\begin{proof}
    Consider a $k$-divisible ideal: $M=(m,m_1,...,m_{q-1})$, with $m$ a witness of $k$-divisibility. Then consider the ordering $m<m_1<...<m_{q-1}$. In the associated Lyubeznik resolution, every size $k$ face must contain $m$ by \cref{lem:root}. Hence it must be that $\opb_k(S/M) \leq \binom{q-1}{k-1}$. Last, \cref{lem:root} also tells us that $\pd(S/M)\leq k$ since there are no rooted faces of size strictly larger than $k$.
\end{proof}

The notion of $d$-dominance similarly produces a lower bound on the projective dimension of $M$; it will be our subsequent goal to pair these properties to consider a case where they fully determine the minimal resolution. We first provide a proof of a standard fact about the Betti numbers of monomial ideals.

\begin{lemma}\label{lem:alt-sum}
    For any monomial ideal $M\neq 0$, $\sum_{i\geq 0}(-1)^i\opb_i(S/M)=0$.
\end{lemma}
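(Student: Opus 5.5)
The plan is to compare the minimal resolution with the Taylor resolution, where the alternating sum can be computed directly. By the convention adopted above, the minimal free resolution $\mathbb{F}$ of $S/M$ is obtained from $\mathbb{T}_M$ by a sequence of consecutive cancellations. Each cancellation removes one basis element from some $T_i$ and one from $T_{i+1}$, both of the same multidegree. Hence each cancellation lowers $\opb_i$ and $\opb_{i+1}$ by one each, and so leaves the alternating sum of ranks unchanged.

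It therefore suffices to compute the alternating sum of the ranks in $\mathbb{T}_M$. The rank of $T_i$ is $\binom{q}{i}$, since its basis is the set of $i$-subsets of the $q$ minimal generators. Because $M\neq 0$, we have $q\geq 1$, and the binomial theorem gives
$$\sum_{i=0}^{q}(-1)^i\binom{q}{i}=(1-1)^q=0.$$
Since cancellations preserve this quantity, $\sum_{i\geq 0}(-1)^i\opb_i(S/M)=0$.

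The only point needing care is the justification that minimal Betti numbers are reached by cancellations, each removing one generator in each of two adjacent homological degrees. The paper has already cited this from \cite{Ales3} and \cite{Peeva}, so this step is available.

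Alternatively, I could avoid cancellations by tensoring the minimal resolution with the fraction field of $S$. The result is an exact complex of vector spaces ending in $S/M\otimes \operatorname{Frac}(S)=0$, because $M\neq 0$ makes $S/M$ torsion. An exact complex of finite-dimensional vector spaces has zero Euler characteristic, which gives the same identity. I would present the cancellation argument, since it matches the paper's framework.
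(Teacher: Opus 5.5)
Your proposal is correct and follows the same route as the paper: the Taylor ranks $\binom{q}{i}$ have alternating sum $(1-1)^q=0$ for $q\geq 1$, and each consecutive cancellation removes one basis element from each of two adjacent homological degrees, so the sum is preserved all the way to the minimal resolution. Your alternative argument, tensoring with the fraction field of $S$, is also valid, and it has the advantage of not relying on the cancellation description of minimal resolutions.
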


\begin{proof}
    Recall that for any $q>0$, $\sum_{i=0}^q(-1)^i\binom{q}{i}=0$. Notice that if $M=(m_1,...,m_q)$, then $\sum_{i=0}^q(-1)^i\binom{q}{i}$ is precisely the alternating sum of the ranks of the modules in the Taylor resolution of $S/M$. The minimal free resolution of $S/M$ may be obtained from its Taylor resolution by means of consecutive cancellation, and at each stage of obtaining the minimal resolution from the Taylor resolution, consecutive cancellation removes exactly two basis elements from the resolution. After one cancellation in the Taylor resolution, say between elements in homological degrees $k$ and $k+1$, the sum is preserved: $$\sum_{i=0}^{k-1}(-1)^i\binom{q}{i}+(-1)^k\Big{(}\binom{q}{k}-1\Big{)}+(-1)^{k+1}\Big{(}\binom{q}{k+1}-1\Big{)}+\sum_{i=k+2}^{q}(-1)^i\binom{q}{i}=\sum_{i=0}^{q}(-1)^i\binom{q}{i}=0$$
    At each stage of obtaining the minimal resolution from the Taylor resolution via consecutive cancellation, the alternating sum of the ranks of the modules in the resolution is $0$. Hence, $\sum_{i=0}^d(-1)^i\opb_i(S/M)=0$.
\end{proof} 

\begin{proposition}\label{prop:ddom-bound}
    If $M=(m_1,...,m_q)$, $d \leq q-1$, and $M$ is $d$-dominant, then $\pd(S/M) \geq d+1$
\end{proposition}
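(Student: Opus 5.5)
The plan is to combine \cref{lem:scarf} with the alternating-sum identity of \cref{lem:alt-sum}, and argue by contradiction. Suppose $M$ is $d$-dominant with $d\leq q-1$, and assume $\pd(S/M)\leq d$. By \cref{lem:scarf}, the first $d+1$ Betti numbers are completely determined: $\opb_j(S/M)=\binom{q}{j}$ for $0\leq j\leq d$. If the projective dimension is at most $d$, these are all the Betti numbers. Then \cref{lem:alt-sum} gives
\[
0=\sum_{j=0}^{d}(-1)^j\opb_j(S/M)=\sum_{j=0}^{d}(-1)^j\binom{q}{j}.
\]

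The key computation is the standard partial alternating sum identity
\[
\sum_{j=0}^{d}(-1)^j\binom{q}{j}=(-1)^d\binom{q-1}{d}.
\]
It follows by induction on $d$ from Pascal's rule $\binom{q}{j}=\binom{q-1}{j}+\binom{q-1}{j-1}$, since the sum telescopes. Because $0\leq d\leq q-1$, the right-hand side is nonzero. This contradicts the vanishing forced by \cref{lem:alt-sum}. Hence $\pd(S/M)\geq d+1$.

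There is no serious obstacle; the only point to check is that the range $d\leq q-1$ is exactly what makes $\binom{q-1}{d}\neq 0$. This is where the hypothesis enters; the inequality $d\leq q-1$ also makes $d$-dominance meaningful. One also needs $M\neq 0$ for \cref{lem:alt-sum}, which holds since $q\geq 1$.

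Alternatively, one can avoid the identity entirely. By \cref{lem:scarf}, every $(d+1)$-subset $\sigma$ has $\lcm(\sigma)$ distinct from the lcm of each of its facets. From this one would argue directly that some face of size $d+1$ survives all cancellations. The counting argument is cleaner, so I would present that one.
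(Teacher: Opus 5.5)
Your argument is the same as the paper's proof. \cref{lem:scarf} fixes $\opb_0,\dots,\opb_d$, and if $\pd(S/M)\leq d$ the alternating sum is $(-1)^d\binom{q-1}{d}\neq 0$, contradicting \cref{lem:alt-sum}; the paper first notes $\pd(S/M)\geq d$ and then rules out equality, which amounts to the same thing.

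Your sketched alternative would not work as stated, because a $(d+1)$-face can still cancel against a $(d+2)$-face. This happens, for instance, for $1$-cancellation ideals with $d=q-2$. So you were right to present the counting argument.
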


\begin{proof}
     By \cref{lem:scarf}, $\opb_j(S/M)=\binom{q}{j}$ for each $0\leq j \leq d$. Hence $\pd(S/M) \geq d$. If $\pd(S/M) = d$ then since $d\leq q-1$, $$\sum_{i\geq 0}(-1)^i\opb_i=\sum_{i=0}^d(-1)^i\opb_i = \sum_{i=0}^{d}(-1)^i\binom{q}{i}=(-1)^d\binom{q-1}{d} \neq 0$$ This is a contradiction by \cref{lem:alt-sum}, hence $\pd(S/M) > d$.
\end{proof}

For a monomial ideal $M=(m_1,...,m_q)$, we use the notation $\sigma$ to denote a subset of $\{m_1,...,m_q\}$ that defines a basis element from the Taylor resolution of $M$ that is also in the minimal resolution of $S/M$. We also define $a_\sigma = \lcm(\sigma)$. We will show that $d$-dominance also forces the quantity $\deg(a_\sigma)-|\sigma|$ to be monotonically non-decreasing through homological degree $d$. This means that in the case where $M$ is $(k-1)$-dominant, the regularity of $S/M$ is calculated by finding the maximal degree of the multidegrees of the size $k$-sets of generators that include the witness $m$. First, we make a formal definition.

\begin{definition}
    Consider a monomial ideal $M=(m_1,...,m_q)$. The regularity of $M$, denoted $\reg(S/M)$, is defined as follows: $\reg(S/M)=\max\{j-i:\opb_{i,j}(S/M) \neq 0\}$. For the ideals we have been considering, this is equivalent to: $\reg(S/M)=\max_\sigma\{\deg(a_\sigma)-|\sigma|\}$, where the maximum is taken over all possible basis elements $\sigma$ in the minimal free resolution of $M$.
\end{definition}

\begin{proposition}\label{prop:ddom-reg}
    If $M$ is $d$-dominant, then $\reg(S/M)=max\{\deg(a_\sigma)-|\sigma|:|\sigma| \geq d\}$ where $\sigma$ is the face supporting a basis element $[\sigma]$ in the minimal resolution and $a_\sigma=\lcm(\sigma)$. That is, the regularity lives in homological degree at least $d$ in the minimal resolution.
\end{proposition}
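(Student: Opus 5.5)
The plan is to show that the quantity $\deg(a_\sigma)-|\sigma|$ is non-decreasing along inclusions of faces of size at most $d$. Every face of size $<d$ is contained in one of size exactly $d$, and by \cref{lem:scarf} all faces of size at most $d$ survive in the minimal resolution. So the maximum defining $\reg(S/M)$ can be taken over $|\sigma|\geq d$. (When $d=q$ is not allowed, $d\le q-1$, and every face of size $<d$ is strictly contained in a size-$d$ face.)

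The key step is a strict-increase inequality. Let $\tau\subsetneq\sigma$ with $|\sigma|\leq d$, and write $\sigma=\tau\cup\{a\}$ with $a\notin\tau$. Since $|\tau|\leq d-1\leq d$, $d$-dominance (and hence $|\tau|$-dominance) gives $a\nmid\lcm(\tau)$. Therefore $\lcm(\sigma)=\lcm(\tau\cup\{a\})$ is a proper multiple of $\lcm(\tau)$, so $\deg(a_\sigma)\geq\deg(a_\tau)+1$. Hence
$$\deg(a_\sigma)-|\sigma|\geq \deg(a_\tau)+1-(|\tau|+1)=\deg(a_\tau)-|\tau|.$$
Iterating this one element at a time, any face $\tau$ with $|\tau|<d$ is dominated by some face $\sigma\supset\tau$ with $|\sigma|=d$. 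The empty face needs separate care: $\deg(a_\emptyset)-0=0$, and for a singleton $\deg(m_i)-1\geq 0$ because generators are nonconstant, since $M$ is a proper ideal. So the chain starting at $\emptyset$ also works.

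To conclude, by \cref{lem:scarf} every face $\sigma$ with $|\sigma|=d$ is Scarf, so $[\sigma]$ is a basis element of the minimal resolution. It therefore participates in the maximum over $|\sigma|\geq d$. Every minimal-resolution basis element of homological degree $<d$ then has value at most that of some size-$d$ basis element, which gives the equality.

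The only delicate point is justifying that the size-$d$ faces actually appear in the minimal resolution, since that is needed for the maximum over $|\sigma|\geq d$ to see them. This follows from the Betti number count in \cref{lem:scarf} together with the convention that the minimal resolution is obtained by cancellation, under which Scarf faces are never cancelled.
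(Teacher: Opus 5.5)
Your proposal is correct and is essentially the paper's proof: the same claim that adjoining a generator $a\notin\tau$ to a face $\tau$ of size below $d$ strictly raises $\deg(\lcm(\tau))$ by $d$-dominance, iterated up to a size-$d$ face, which is Scarf by \cref{lem:scarf} and so survives in the minimal resolution. Your separate handling of the empty face is a small improvement, since the paper's claim is stated only for $0<j\leq d$.
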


\begin{proof}
    Suppose $M$ is $d$-dominant.
    
    Claim: For each $0<j\leq d$ and $A \subset \{m_1,...,m_q\}$ such that $|A|=j$ and each $m_i \in (\{m_1,...,m_q\}\setminus A)$, we have $\deg(\lcm(A))<\deg(\lcm(A\cup\{m_i\}))$.

    Proof of claim: Let $0<j\leq d$ and consider an arbitrary set of generators $A \subset \{m_1,...,m_q\}$ of size $j$. Notice that for each $m_i\in (\{m_1,...,m_q\}\setminus A)$, we have that $m_i \nmid \lcm(A)$ by $d$-dominance. This means that $m_i$ must have at least one variable whose exponent is strictly greater than its exponent in $\lcm(A)$. Notice this means that $\deg(\lcm(A))< \deg(\lcm(A\cup \{m_i\}))$. This completes the proof of the claim.

    By \cref{lem:scarf} each face of size $\leq d$ is Scarf, hence the basis elements of the minimal resolution in homological degrees $\leq d$ are exactly the faces of those size. Let $\sigma$ be a basis element with $|\sigma|<d$. We can extend it one generator at a time, $\sigma \subset \sigma_1 \subset ... \subset \sigma_t$ where $|\sigma_t|=d$. At each stage $|\sigma_j|<d$, the claim entails that $\deg(\lcm(\sigma_{j+1})-|\sigma_{j+1}|) \geq \deg(\lcm(\sigma_j)-|\sigma_j|)$. Also, $\sigma_t$ is Scarf since it is size $d$, and thus is a basis element of the minimal resolution. This means that each basis element of size $<d$ is dominated by one with size $\geq d$, and hence $\reg(S/M)=max\{\deg(a_\sigma)-|\sigma|:|\sigma| \geq d\}$.
\end{proof}

When the level of dominance is exactly adjacent to the divisibility level, the inequalities shown above allow us to determine the projective dimension, all Betti numbers, and the differential maps. Moreover, the witness-first Lyubeznik resolution of monomial ideals of this form is their minimal resolution.

\begin{theorem}\label{thm:big-lyub}
    Suppose that $M=(m_1,...,m_q)$ is $k$-divisible and $(k-1)$-dominant. Then the witness-first Lyubeznik resolution $\mathbb{L}_M$ exists and is the minimal resolution of $S/M$. Moreover,\\
    (i) $\pd(S/M)=k$, \\
    (ii) for all $i \in \{0,1,...,k-1\}$, $\opb_i=\binom{q}{i}$ and $\opb_k=\binom{q-1}{k-1}$, \\
    (iii) $\reg(S/M)=\max\{\deg(a_\sigma)-k : |\sigma|=k, [\sigma]\in \mathbb{L}_M\}$.
\end{theorem}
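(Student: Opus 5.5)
The plan is to identify the rooted faces of a witness-first ordering exactly, and then use a counting argument against the Betti numbers forced by \cref{lem:scarf} and \cref{prop:ddom-bound}.

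Write $M=(m,m_1,\dots,m_{q-1})$ with $m$ a witness of $k$-divisibility, and fix any witness-first ordering $<$.

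\textbf{Step 1: the rooted faces.} We claim the rooted faces of $\Lambda_{M,<}$ are exactly
\begin{itemize}
\item all faces of size at most $k-1$, and
\item the faces of size $k$ that contain $m$.
\end{itemize}
By \cref{lem:root}, every rooted face has size at most $k$, and a rooted face of size $k$ contains $m$. For the converse, take a face $\sigma$ in one of the two families and a nonempty $\tau\subseteq\sigma$; we must show $\min(\tau)\in\tau$.

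If $|\tau|\leq k-1$, then $(k-1)$-dominance says no generator outside $\tau$ divides $\lcm(\tau)$. Hence every generator dividing $\lcm(\tau)$ lies in $\tau$, and $\min(\tau)\in\tau$.

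If $|\tau|=k$, then $\tau=\sigma$ and $m\in\sigma$. Since $m$ is the least element of the ordering and $m\mid\lcm(\sigma)$, we get $\min(\sigma)=m\in\sigma$.

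This description does not depend on how the generators after $m$ are ordered. So $\mathbb{L}_M$ is well defined, which settles the ``exists'' part of the statement.

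\textbf{Step 2: the ranks.} By Step 1, $\mathbb{L}_M$ has rank $\binom{q}{i}$ in homological degree $i$ for $0\leq i\leq k-1$, rank $\binom{q-1}{k-1}$ in degree $k$, and nothing above degree $k$. Compare this with the minimal resolution:
\begin{itemize}
\item \cref{lem:scarf} with $d=k-1$ gives $\opb_i=\binom{q}{i}$ for $i\leq k-1$;
\item \cref{thm:kdiv-bound} gives $\pd(S/M)\leq k$;
\item \cref{prop:ddom-bound} gives $\pd(S/M)\geq k$ (this needs $k-1\leq q-1$, which holds).
\end{itemize}
So the minimal resolution vanishes above degree $k$. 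By \cref{lem:alt-sum},
\[
(-1)^k\opb_k=-\sum_{i=0}^{k-1}(-1)^i\binom{q}{i}=(-1)^{k}\binom{q-1}{k-1},
\]
using the standard partial alternating sum identity. Hence $\opb_k=\binom{q-1}{k-1}$.

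Since $\mathbb{L}_M$ is a free resolution of $S/M$, the minimal resolution is a direct summand of it, degree by degree. The ranks agree in every degree, so $\mathbb{L}_M$ is itself minimal. This proves (i) and (ii).

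\textbf{Step 3: regularity.} For (iii), \cref{prop:ddom-reg} with $d=k-1$ says the regularity is attained in homological degree $k-1$ or $k$. It remains to show that degree $k-1$ never wins. Take any face $A$ of size $k-1$. If $m\notin A$, then $A\cup\{m\}$ is a basis element of $\mathbb{L}_M$. Since $(k-1)$-dominance gives $m\nmid\lcm(A)$, we have
\[
\deg\lcm(A\cup\{m\})\geq\deg\lcm(A)+1,
\]
so $A\cup\{m\}$ has $\deg(a_\sigma)-|\sigma|$ at least as large as that of $A$. If $m\in A$, adjoin any $m_i\notin A$ instead; such an $m_i$ exists because $k\leq q$, and $m_i\nmid\lcm(A)$ by dominance gives the same inequality. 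Either way the maximum is achieved by some size-$k$ face in $\mathbb{L}_M$, which is (iii).

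The main obstacle is the converse direction in Step 1, showing that every subset of a candidate face is rooted. That is where $(k-1)$-dominance is used essentially. The degenerate case $k=q$ should also be checked, but it is consistent with the argument above.
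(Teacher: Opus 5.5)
Your proof is correct. Steps 1 and 3 coincide with the paper's argument: the same description of the rooted faces, and the same domination of each $(k-1)$-face by a rooted $k$-face. One small implicit point in Step 1: for $|\tau|<k-1$ you use that $(k-1)$-dominance implies $|\tau|$-dominance. This holds by padding $\tau$ to size $k-1$ while avoiding the outside generator, which is possible since $q\ge k$, and the paper also notes it.

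You diverge from the paper in how you prove minimality of $\mathbb{L}_M$.

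\textbf{The paper's route.} It argues directly on the complex. In $\Lambda_{M,<}$, every face--facet pair has its facet of size at most $k-1$, and such a facet is Scarf by \cref{lem:scarf}. Hence every coefficient $\pm\lcm(\sigma)/\lcm(\tau)$ in the differential is a non-unit. Minimality follows at once, and the Betti numbers and $\pd(S/M)=k$ are then read off from the count of rooted faces.

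\textbf{Your route.} You first pin down the Betti numbers of $S/M$ independently of $\mathbb{L}_M$. You use \cref{lem:scarf}, the bound $\pd(S/M)\le k$ from \cref{thm:kdiv-bound}, and the Euler characteristic of \cref{lem:alt-sum} with the partial alternating-sum identity. You then conclude minimality by a rank comparison, using that any graded free resolution is the minimal one plus a trivial complex.

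\textbf{Comparison.} The paper's route is shorter and needs no binomial identity. Yours never looks at the differential. It also shows that $\opb_k=\binom{q-1}{k-1}$ is forced numerically once $\opb_0,\dots,\opb_{k-1}$ and $\pd(S/M)\le k$ are known. Finally, \cref{prop:ddom-bound} is redundant in your Step 2: the Euler characteristic already gives $\opb_k=\binom{q-1}{k-1}>0$, and hence $\pd(S/M)=k$.
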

\begin{proof}
    As usual, $M=(m,m_1,...m_{q-1})$ with $m$ a witness of $k$-divisibility. By a witness-first ordering of the generators we mean any ordering of the form: $m<m_{i_1}<...<m_{i_{q-1}}$, where $\{i_1,...,i_{q-1}\}=\{1,2,...,q-1\}$. We will fix an arbitrary witness-first ordering with $m$ as the least element, and observe that the rooted faces can be identified without reference to the ordering of $m_1,...,m_q$. Hence, in this setting we are justified in using the notation $\mathbb{L}_M$.
    
    Claim:  Fix a witness-first ordering: the rooted faces of $\Delta_M$ are exactly each face of size $\leq k-1$ and the size $k$ faces which include $m$. 
    
    Proof of claim:  Let $\sigma_1$ be a face of the full simplex of $M$ such that $|\sigma_1|\leq k-1$. Consider an arbitrary subface $\sigma_1'$ of $\sigma_1$, and notice that if $m|\lcm(\sigma_1')$ then $m \in \sigma_1'$ by $|\sigma_1'|$-dominance. Similarly if $m_i|\lcm(\sigma_1')$ then $m_i \in \sigma_1'$. Therefore, each subface of $\sigma_1$ is rooted, and hence so is $\sigma_1$. 
    
    Next, consider another face $\sigma_2$ with $|\sigma_2| =k$ and $m \in \sigma_2$. Consider a subface $\sigma_2'$ with $|\sigma_2'|<k$. Then as before $\sigma_2'$ must be rooted by $|\sigma_2'|$-dominance. Since the full face is also rooted because $m \in \sigma_2$, $\sigma_2$ is itself rooted. By \cref{lem:root}, each face of size strictly greater than $k$ is unrooted. Also, if $\sigma_3$ is s.t $|\sigma_3|=k$ and $m \notin \sigma_3$ then clearly $\sigma_3$ is not rooted since $m|\lcm(\sigma_3)$ and $m \notin \sigma_3$. Hence, each rooted face is as we claimed and the resolution is minimal since no two faces in adjacent homological degrees share a multidegree by \cref{lem:scarf}.
    
    Each face in homological degree $k$ in the minimal resolution includes $m$ and each face of size $k$ that includes $m$ which supports a basis element in the Taylor resolution is a rooted face. There are $\binom{q-1}{k-1}$ such faces, hence $\opb_k=\binom{q-1}{k-1}$. Each Taylor-face of size $<k$ is rooted, thus $\opb_j=\binom{q}{j}$ for each $0\leq j<k$.
   Since $k\leq q$, $\opb_k=\binom{q-1}{k-1}>0$ and also $\pd(S/M) \leq k$ by $k$-divisibility, hence $\pd(S/M)=k$.
   
  By \cref{prop:ddom-reg}, the regularity is computed over faces of size $k$ or $k-1$. We will show that each $(k-1)$-face is dominated by a rooted size-$k$ face. Let $\sigma$ be a face such that $|\sigma|=k-1$. If $m \notin \sigma$, consider $\tau = \sigma \cup \{m\}$. Now $(k-1)$-dominance forces $m \nmid \lcm(\sigma)$, and so $\deg(\tau)>\deg(\sigma)$. If $m \in \sigma$, consider any $m_i \notin \sigma$ and define $\tau= \sigma \cup \{m_i\}$. Then again, $m_i \nmid \lcm(\sigma)$ and hence $\deg(\tau)>\deg(\sigma)$. In either case, $\tau$ is a rooted face and hence $\deg(\lcm(\tau))-k \geq \deg(\lcm(\sigma))-(k-1)$. Hence, $\reg(S/M)=\max\{\deg(a_\sigma)-k : |\sigma|=k, [\sigma]\in \mathbb{L}_M\}$. 
\end{proof}

\begin{exam}
    We return to the ideal from the first example. Recall that $M=(xyz,xu,yv,zw)$ is $2$-dominant and $3$-divisible. Then by \cref{thm:big-lyub}, the Lyubeznik resolution obtained from the ordering $xyz<xu<yv<zw$ is the minimal resolution (any ordering which has $xyz$ as the least element produces a Lyubeznik resolution which is the minimal resolution. Thus, we could also use the ordering, $xyz<zw<xu<yv$.) The minimal resolution is thus well understood: producing the faces in each homological degree is a simple combinatorial exercise, the differential mappings are fully understood, etc. Also, $\pd(S/M)=3$, the Betti numbers are $\opb_0=\binom{4}{0}=1$, $\opb_1=\binom{4}{1}=4$, $\opb_2=\binom{4}{2}=6$, $\opb_3=\binom{4-1}{3-1}=3$, and  $\reg(S/M)=\max\{\deg(a_\sigma)-3 : |\sigma|=3, [\sigma]\in \mathbb{L}_M\}= 6-3=3$.
\end{exam}

\begin{remark}
   This theorem allows us to resolve Alesandroni's Conjecture $7.3$ from \cite{Ales1}, which says: if $M$ is minimally generated by more than one monomial, and there is a minimal generator that divides the lcm of every pair of generators, then $\pd(S/M ) = 2$. Notice that $M$ is $1$-dominant because every monomial ideal is, and also $M$ is not $2$-dominant since $M$ is $2$-divisible. The statement of the conjecture is equivalent to this: if $q \geq 2$ and $M$ is $2$-divisible, then $\pd(S/M)=2$. But $2$-divisibility implies $\pd(S/M)\leq 2$ by \cref{thm:kdiv-bound} and $1$-dominance implies $\pd(S/M)>1$ by \cref{prop:ddom-bound}, hence we know $\pd(S/M)=2$ as desired.
\end{remark}

\begin{definition}
  We must rehearse some definitions from \cite{Ales3}. By $\supp(m_i)$ we mean the set of variables that appear in $m_i$.  A set of monomials $\{m_1,...,m_q\}$ is called dominant if each generator $m_i$ is such that $\exists  x \in \supp(m_i)$ with $\deg_{m_i}(x) > \max\{\deg_{m}(x):m\in \{m_1,...,m_q\}\setminus\{m_i\} \}$, where $\deg_{m}(x)$ is the degree of the variable $x$ in the monomial $m$. That is, each generator must have a variable that appears with degree strictly greater than in any other generator in the set.    We call $\{m_1,...,m_q\}$ a semidominant set if it is not dominant but $\exists m_j \in \{m_1,...,m_q\}$ such that $\{m_1,...,\widehat{m_j},...,m_q\}$ is dominant. A monomial ideal $M=(m_1,...,m_q)$ is called dominant if $\{m_1,...,m_q\}$ is a dominant set and semidominant if $\{m_1,...,m_q\}$ is a semidominant set.
\end{definition}

Alesandroni defined the terminology of $1$-cancellation ideals, and we prove a characterization of these ideals in terms of our notion of $d$-dominance. Notice that, while our definition of $d$-dominance is different from a monomial ideal being dominant, several notions overlap: $M=(m_1,...,m_q)$ is a dominant ideal if and only if $M$ is $(q-1)$-dominant if and only if $M$ satisfies $\kappa(M)=q$. This is because $(q-1)$-dominance means that no generator divides the lcm of the rest, which can only happen if each generator has a variable whose degree is strictly higher than in any other generator of $M$. We will say more about how these ideas are related when we prove a characterization of $1$-cancellation ideals.

\begin{definition}
    A monomial ideal $M$ is called a $1$-cancellation ideal if a minimal resolution of $S/M$ can be obtained from $\mathbb{T}_M$ by means of exactly one consecutive cancellation.
\end{definition}

\begin{theorem} \label{thm:ales}
    Let $M$ be a monomial ideal minimally generated by $G = \{m_1,...,m_q\}$. Then $M$ is a $1$-cancellation ideal if and only if $G$ is not dominant but every subset $G\setminus \{m_i\}$ is.
\end{theorem}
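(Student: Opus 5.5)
The plan is to translate the condition in the statement into the invariants of this paper and then apply \cref{thm:big-lyub} with $k=q-1$. First I will check that, for $q\geq 3$, ``$G$ is not dominant but every $G\setminus\{m_i\}$ is'' is equivalent to ``$M$ is $(q-1)$-divisible and $(q-2)$-dominant''.

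Indeed, $G$ not dominant means some generator $m$ divides $\lcm(G\setminus\{m\})$. Then $m$ divides the lcm of every $(q-1)$-subset: the only one omitting $m$ is $G\setminus\{m\}$, and the others contain $m$. So $M$ is $(q-1)$-divisible with witness $m$. Conversely, a $(q-1)$-divisibility witness $m$ divides $\lcm(G\setminus\{m\})$, so $G$ is not dominant. Next, every $G\setminus\{m_i\}$ being dominant says that no generator divides the lcm of $q-2$ others, which is exactly $(q-2)$-dominance. For $q\le 2$ both sides fail. For $q=1$ the Taylor resolution is minimal. For $q=2$, $G$ is always dominant, and $\mathbb{T}_M$ is already minimal because $\lcm(m_1,m_2)\neq m_1,m_2$. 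So we may assume $q\geq 3$.

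($\Leftarrow$) Assume $M$ is $(q-1)$-divisible and $(q-2)$-dominant. By \cref{thm:big-lyub} with $k=q-1$, the Betti numbers are $\opb_i=\binom{q}{i}$ for $i\le q-2$, $\opb_{q-1}=\binom{q-1}{q-2}=\binom{q}{q-1}-1$, and $\opb_q=0=\binom{q}{q}-1$. So the total rank of the minimal resolution is exactly two less than that of $\mathbb{T}_M$. Since each consecutive cancellation removes exactly two basis elements, exactly one cancellation is performed. Concretely, it is $[G]$ against $[G\setminus\{m\}]$.

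($\Rightarrow$) Assume $M$ is a $1$-cancellation ideal. I will use the multigraded fact that $\opb_{i,u}(S/M)=\dim H_i(\mathbb{T}_M\otimes\mathbb{F})_u$, together with the observation that $\mathbb{T}_M\otimes\mathbb{F}$ decomposes into strands by multidegree. The coefficient of $[\sigma\setminus\{a\}]$ in the differential of $[\sigma]$ is a unit precisely when $a\mid\lcm(\sigma\setminus\{a\})$. Hence the drop in total rank from $\mathbb{T}_M$ to the minimal resolution equals twice the total rank of the reduced differential, and the hypothesis says this rank is exactly $1$.
\begin{itemize}
\item First, $G$ is not dominant. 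Otherwise $M$ is $(q-1)$-dominant, \cref{lem:scarf} makes every face Scarf, and no cancellation occurs at all.
\item Second, suppose $M$ were not $(q-2)$-dominant. Then there are $a$ and $\tau$ with $|\tau|=q-2$, $a\notin\tau$ and $a\mid\lcm(\tau)$. Let $b$ be the remaining generator, so $G=\tau\cup\{a,b\}$. Both pairs $(\tau\cup\{a\},\tau)$ and $(G,G\setminus\{a\})$ carry unit coefficients.
\item If $\lcm(\tau\cup\{a\})\neq\lcm(G)$, these pairs lie in different strands, so the reduced differential has rank at least $2$.
\item If the two multidegrees coincide, the single strand contains $[G]$, $[G\setminus\{b\}]=[\tau\cup\{a\}]$ and $[\tau]$. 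There $d[G]\neq0$ and $d[G\setminus\{b\}]\neq 0$. Because $d\circ d=0$, these lie in different homological degrees, and the ranks of $d_q$ and $d_{q-1}$ add up to at least $2$.
\item Either way the rank drop is at least $4$, contradicting exactly one cancellation.
\end{itemize}
Thus $M$ is $(q-2)$-dominant, and with the first point, $G$ is not dominant while every $G\setminus\{m_i\}$ is.

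The main obstacle is this second step: making rigorous that ``exactly one cancellation'' forces the reduced Taylor differential to have rank $1$, independent of the order in which cancellations are performed. I would handle it via the multigraded Betti numbers as homology of $\mathbb{T}_M\otimes\mathbb{F}$, rather than tracking how the differentials change under successive cancellations.
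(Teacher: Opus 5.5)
Your proof is correct, but it takes a different route from the paper's.

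The paper never leaves the Taylor complex. For ($\Leftarrow$) it observes that an equal-multidegree face--facet pair $([\sigma],[\tau])$ forces $\sigma$ to be nondominant. If $|\sigma|<q$, then $\sigma\subset G\setminus\{m_i\}$ would make $G\setminus\{m_i\}$ nondominant, so the only possible cancellation involves $[G]$. For ($\Rightarrow$), a nondominant $G\setminus\{m_i\}$ produces a cancellable pair in degrees $q-1,q-2$.

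You instead first translate the hypothesis into $(q-1)$-divisibility plus $(q-2)$-dominance. This is the content of \cref{prop:one-canc}, which you obtain directly rather than deducing it from this theorem; there is no circularity, since \cref{thm:big-lyub} does not use the theorem. You then get ($\Leftarrow$) by counting the Betti numbers from \cref{thm:big-lyub}. You get ($\Rightarrow$) from the identity $\sum_i\opb_i(S/M)=2^q-2\sum_i\operatorname{rank}\bar d_i$ for the reduced differentials $\bar d_i$ of $\mathbb{T}_M\otimes\mathbb{F}$.

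What your route buys is rigor in ($\Rightarrow$). The paper tacitly assumes that two cancellable pairs in $\mathbb{T}_M$ force two cancellations, whatever the order and however the differentials change after the first one. Your identity makes the number of cancellations an order-independent invariant, namely $\sum_i\operatorname{rank}\bar d_i$.

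What the paper's route buys is economy. Your ($\Leftarrow$) does not need the Lyubeznik machinery, because the same identity gives it at once. $(q-2)$-dominance kills every unit entry of $\bar d_i$ for $i\le q-1$. Also, $\bar d_q$ has rank exactly one, because $T_q$ has rank one and $G$ is nondominant.

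Likewise, your case split in ($\Rightarrow$) is unnecessary. The unit entries $(G,G\setminus\{a\})$ and $(\tau\cup\{a\},\tau)$ lie in $\bar d_q$ and $\bar d_{q-1}$ respectively. So the total rank is at least $2$ regardless of multidegrees, and the appeal to $d\circ d=0$ plays no role.
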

\begin{proof}
    ($\implies$) Since $M$ is $1$-cancellation, it is not dominant and, hence, $G$ is not dominant. Also, if we assume a subset $G \setminus \{m_i\}$ is not dominant, there is a monomial $m_j \in G \setminus\{m_i\}$ such that $m_j| \lcm(G \setminus\{m_i,m_j\})$. It follows that $\lcm(G \setminus \{m_i,m_j\})= \lcm(G \setminus\{m_i\})$. Let $[\sigma]$ and $[\tau]$ be the Taylor symbols defined by $G \setminus\{m_i\}$ and $G\setminus\{m_i,m_j\}$, respectively. Then $[\sigma]$ and $[\tau]$ are face and facet of equal multidegree, and they appear in homological degrees $q-1$ and $q-2$. Thus, it is possible to make the cancellation $0 \xrightarrow{}S[\sigma] \xrightarrow{} S[\tau] \xrightarrow{} 0$ in $\mathbb{T}_M$, which contradicts the fact that the only cancellation occurs in the last two homological degrees of $\mathbb{T}_M$.
    
    ($\impliedby$) Suppose that $[\sigma]$ and $[\tau]$ are basis elements of $\mathbb{T}_M$, such that $[\tau]$ is a facet of $[\sigma]$ and they have equal multidegree. Then $\sigma$ is not a dominant set. If \#$(\sigma)<q$, then there is a set of the form $G \setminus \{m_i\}$ such that $\sigma \subset G \setminus\{m_i\}$. This implies that $G \setminus \{m_i\}$ is not dominant, which is a contradiction. Thus, \#$(\sigma)=q$.
\end{proof}

We prove that $M=(m_1,...,m_q)$ being a $1$-cancellation ideal turns out to be equivalent to the tight case of $(q-1)$-divisibility, $(q-2)$-dominance.

\begin{proposition}\label{prop:one-canc}
    $M=(m_1,...,m_q)$ is a $1$-cancellation ideal if and only if $M$ is $(q-1)$-divisible and $(q-2)$-dominant.
\end{proposition}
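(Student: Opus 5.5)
We will route both directions through \cref{thm:ales}, which says that $M$ is $1$-cancellation if and only if $G=\{m_1,\dots,m_q\}$ is not dominant while every $G\setminus\{m_i\}$ is dominant. The first step is to translate dominance of a set into divisibility language. A set $H$ of monomials is dominant exactly when no element of $H$ divides the lcm of the others. This is the same observation made in the paper for $(q-1)$-dominance: a generator $h$ fails to divide $\lcm(H\setminus\{h\})$ iff some variable appears in $h$ with exponent strictly larger than in every other element of $H$.

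($\implies$) Assume $M$ is $1$-cancellation. To get $(q-2)$-dominance, take any $(q-1)$-set $A\subset G$ and any $a\in A$. We have $A=G\setminus\{m_i\}$ for some $i$, and this set is dominant by \cref{thm:ales}. Hence $a\nmid\lcm(A\setminus\{a\})$, which is precisely the $(q-2)$-dominance condition. To get $(q-1)$-divisibility, use that $G$ is not dominant. So some $m\in G$ satisfies $m\mid \lcm(G\setminus\{m\})$. We must check that $m$ divides the lcm of \emph{every} $(q-1)$-subset. The subset $G\setminus\{m\}$ is handled by the choice of $m$. Any other $(q-1)$-subset $G\setminus\{m_j\}$ with $m_j\neq m$ contains $m$, so $m$ divides its lcm trivially. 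Thus $m$ is a witness.

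($\impliedby$) Assume $M$ is $(q-1)$-divisible with witness $m$ and $(q-2)$-dominant. Then every $G\setminus\{m_i\}$ is dominant: its elements number $q-1$, and $(q-2)$-dominance says none divides the lcm of the other $q-2$. Also, $G$ is not dominant, since $m\mid\lcm(G\setminus\{m\})$ by $(q-1)$-divisibility applied to the $(q-1)$-set $G\setminus\{m\}$. \Cref{thm:ales} then gives that $M$ is $1$-cancellation.

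There is no real obstacle here. The only care needed is the translation between ``dominant set'' and ``no element divides the lcm of the rest,'' together with edge cases for small $q$. The range conditions require $q-2\ge 1$, so $q\ge 3$, and we should remark on how degenerate values of $q$ are handled, for instance when $q=2$ and ``$0$-dominance'' is vacuous. As a consistency check, \cref{thm:big-lyub} with $k=q-1$ gives $\opb_{q-1}=\binom{q-1}{q-2}=q-1$ and $\opb_q=0$. This matches a single cancellation of the top Taylor symbol against one facet.
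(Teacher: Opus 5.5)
Your proof is correct. The forward direction is the paper's argument. You usefully spell out why a nondominant $G$ yields a witness: $m\mid\lcm(G\setminus\{m\})$, and every other $(q-1)$-subset contains $m$.

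The reverse direction takes a different route. You use the converse half of \cref{thm:ales}, so the proposition becomes a dictionary translation: ``every $G\setminus\{m_i\}$ is dominant'' is exactly $(q-2)$-dominance, and ``$G$ is not dominant'' is exactly the existence of a $(q-1)$-divisibility witness. The paper does not use \cref{thm:ales} for this direction at all. It gets $\pd(S/M)=q-1<q$ from \cref{thm:big-lyub} with $k=q-1$, so some cancellation is needed. It then uses \cref{lem:scarf} to see that faces of size at most $q-2$ are Scarf, so cancellations occur only between degrees $q-1$ and $q$. Since degree $q$ holds only $[G]$, exactly one cancellation occurs.

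Your argument is shorter, symmetric and purely combinatorial. However, it leans entirely on the converse of \cref{thm:ales}. The paper's proof of that converse only shows that an equal-multidegree face--facet pair must have $\sigma=G$, and leaves the ``exactly one'' count implicit. The paper's argument is independent of that converse. It also presents $1$-cancellation ideals as the $k=q-1$ case of the main Lyubeznik theorem, which is essentially your closing consistency check.

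The $q=2$ case you flag is harmless. There $1$-divisibility would force one minimal generator to divide the other, and $G$ is then dominant, so both sides fail.
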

\begin{proof}
    ($\implies$) Suppose that  $M$ is a $1$-cancellation ideal. Recall \cref{thm:ales} states that $M$ is nondominant but $M_i=(m_1,...,\widehat{m_i},...,m_q)$ is dominant for each $i \in \{1,...,q\}$. This implies that $M$ is $(q-1)$-divisible since $M$ contains a nondominant element. Consider a $(q-2)$-collection $A=\{m_{i_1},...,m_{i_{q-2}}\}$ of generators and suppose for a contradiction that another generator $m_j\notin A$ is such that $m_j|\lcm(A)$. Notice that $A\cup \{m_j\}$ is a $q-1$ collection of minimal generators, hence it is dominant. This contradicts the assumption that $m_j|\lcm(A)$, and hence $M$ must be $(q-2)$-dominant.
    
    ($\impliedby$) If $M$ is $(q-1)$-divisible and $(q-2)$-dominant, then by \cref{thm:big-lyub}, $\pd(S/M)=q-1$. Each face strictly below homological degree $q-1$ is Scarf by \cref{lem:scarf}, hence the consecutive cancellations necessary to obtain the minimal resolution from the Taylor resolution must occur between homological degrees $q-1$ and $q$. You may only perform one cancellation since doing so would remove the only Taylor-face of homological degree $q$, and then no further cancellations are possible by the Scarfness of faces strictly below homological degree $q-1$ from \cref{lem:scarf}.
    
\end{proof}

Next we show that for $k$-divisible monomial ideals, it is enough to check the last Betti number to determine whether $M$ is $(k-1)$-dominant.

\begin{proposition} \label{prop:betti-char}
    If $M$ is $k$-divisible, then $M$ is $(k-1)$-dominant if and only if $\opb_k=\binom{q-1}{k-1}$.
\end{proposition}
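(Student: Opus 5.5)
The forward direction is immediate from \cref{thm:big-lyub}. If $M$ is $k$-divisible and $(k-1)$-dominant, part (ii) of that theorem gives $\opb_k=\binom{q-1}{k-1}$. All the work is in the converse, which I would prove by contrapositive. Assume $M=(m,m_1,\dots,m_{q-1})$ is $k$-divisible with witness $m$ but not $(k-1)$-dominant. The goal is $\opb_k<\binom{q-1}{k-1}$.

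The tool is the counting from \cref{thm:kdiv-bound}. For any witness-first ordering $<$, the Lyubeznik resolution $\mathbb{L}_{M,<}$ is a free resolution of $S/M$. By \cref{lem:root}, its basis in homological degree $k$ consists of rooted $k$-faces, all containing $m$. Hence $\opb_k$ is at most the number of rooted $k$-faces containing $m$, and that number is at most $\binom{q-1}{k-1}$. So it suffices to show one of the following for a suitable witness-first ordering:
\begin{enumerate}[(a)]
\item some $k$-face containing $m$ fails to be rooted; or
\item some rooted $k$-face has a rooted facet of the same multidegree.
\end{enumerate}
In case (b), a consecutive cancellation is possible in $\mathbb{L}_{M,<}$ (the coefficient in the differential is $\pm1$), which removes one more basis element in degree $k$.

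Failure of $(k-1)$-dominance gives a generator $a$ and a set $B$ of $k-1$ other generators with $a\mid\lcm(B)$. (If the failure occurs with fewer than $k-1$ others, I would pad $B$ with further generators, which keeps the divisibility; this needs $q\geq k$, which holds since $k\le q-1$ whenever $(k-1)$-dominance is meaningful.) I would then split into three cases according to where $m$ sits.
\begin{itemize}
\item If $a=m$, set $\sigma=B\cup\{m\}$. Then $\min(B)=m\notin B$, so $\sigma$ is an unrooted $k$-face containing $m$, giving (a).
\item If $m\notin B\cup\{a\}$, choose the witness-first ordering with $a$ second. Then $\min(B)$ is $m$ or $a$, and neither lies in $B$. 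So again $\sigma=B\cup\{m\}$ is unrooted, giving (a).
\item If $m\in B$, set $\sigma=B\cup\{a\}$, which has size $k$, contains $m$, and satisfies $\lcm(\sigma)=\lcm(B)$. If $\sigma$ is unrooted, we are in case (a). If $\sigma$ is rooted, then its facet $B$ is rooted too, since the rooted faces form a simplicial complex. Then $[\sigma]$ and $[B]$ are adjacent basis elements of equal multidegree, giving (b).
\end{itemize}
In every case $\opb_k<\binom{q-1}{k-1}$, completing the contrapositive.

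The step I expect to need the most care is case (b). There I must justify that a face/facet pair of equal multidegree in a Lyubeznik resolution really does lower the Betti number below the count of rooted faces. This follows from the paper's convention that the minimal resolution is obtained by consecutive cancellations, together with the fact that the differential coefficient $\lcm(\sigma)/\lcm(B)$ equals $1$. A secondary point is that the argument uses different witness-first orderings in different cases, which is harmless because each ordering separately gives an upper bound on $\opb_k$.
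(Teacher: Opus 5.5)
Your proposal is correct and is essentially the paper's own proof: the same contrapositive argument with the same three cases (the offending generator is the witness; the witness lies outside $B\cup\{a\}$, handled by placing $a$ second in a witness-first ordering; or the witness lies in $B$, where either $B\cup\{a\}$ is unrooted or it cancels against its rooted facet $B$ of equal multidegree). Your padding step is unnecessary, since by definition the failure of $(k-1)$-dominance already gives a generator dividing the lcm of exactly $k-1$ others.
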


\begin{proof}
    ($\implies$) By \cref{thm:big-lyub}, if $M$ is $k$-divisible and $(k-1)$-dominant then $\opb_k(S/M)=\binom{q-1}{k-1}$.
    
    ($\impliedby$)  We prove the contrapositive. Suppose $M$ is $k$-divisible and not $(k-1)$-dominant. We know from \cref{thm:kdiv-bound} that $k$-divisibility implies $\opb_k\leq \binom{q-1}{k-1}$, so we must simply show that equality is impossible. Since $M$ is not $(k-1)$-dominant, there must be some minimal generator $w$ that divides the lcm of some other $k-1$ collection of minimal generators: $\sigma_{k-1}=\{m_{i_1},...,m_{i_{k-1}}\}$ (notice: different means $w \notin \{m_{i_1},...,m_{i_{k-1}}\}$). By $k$-divisibility there is a witness $m$ such that $m|\lcm(\sigma_{k-1},w)$. If $m\neq w$, fix any order in which $m$ is least and $w$ is second (the rest of the order does not matter). Else, just choose the generic witness first ordering from \cref{thm:big-lyub}. We now split into cases. 
    
    \textbf{Case 1: $m \notin (\sigma_{k-1}\cup \{w\})$}
    
    The face $\sigma_{k-1}$ is not rooted in the Lyubeznik complex formed by the ordering  $m<w<m_{i_1}<...<m_{i_{q-2}}$ since $w|\lcm(\sigma_{k-1})$ and $w \notin \sigma_{k-1}$. This means that the face $\sigma_{k-1} \cup \{m\}$ is also not rooted, since $\sigma_{k-1}$ as a subface is not rooted. This means $\opb_k$ is strictly smaller than $\binom{q-1}{k-1}$. 
    
    \textbf{Case $2$: $m \in (\sigma_{k-1}\cup \{w\})$}
    
    By \cref{lem:root}, each size-$k$ face not containing $m$ is not rooted. If $m=w$, $m|\lcm(\sigma_{k-1})$ but $m \notin \sigma_{k-1}$ and hence $\sigma_{k-1}$ is not rooted and so neither is $\sigma_{k-1}\cup \{m\}$. If $m \in \sigma_{k-1}$, then $\lcm(\sigma_{k-1}\cup \{w\})=\lcm(\sigma_{k-1})$. If $\sigma_{k-1}\cup \{w\}$ is unrooted, done. If $\sigma_{k-1}\cup \{w\}$ is rooted then so also is $\sigma_{k-1}$. But then $\sigma_{k-1}$ is a facet of $\sigma_{k-1}\cup \{w\}$ and hence we may perform a consecutive cancellation, reducing $\opb_k<\binom{q-1}{k-1}$.
\end{proof}

\begin{exam}
     This serves as a nice example of a $k$-divisible, $(k-1)$-dominant type ideal, in particular of one that is $3$-divisible and $2$-dominant with $q=6$ minimal generators. Let $z=\{(i,j)\in\{1,2,...,5\}^2 \ | \ i \neq j\}$ and $y_p = \{(i,j)\in z \ | \ i \neq p, \ j \neq p\} \cup \{(p,p)\}$. Then define $$\overline{z}=\prod_{e\in z}x_e$$ And $$\overline{y_p}=\prod_{e\in y_p}x_e$$ And $M = (\overline{z},\overline{y_1},\overline{y_2},\overline{y_3},\overline{y_4},\overline{y_5})$.

     Then $\overline{z}|\lcm(\overline{y_i},\overline{y_j},\overline{y_k})$ for each choice of a $3$-set: for any $\{r,s,t\}\subset \{1,2,3,4,5\}$, each variable $x_{(a,b)}$ is omitted only by $\overline{y_a}$ and $\overline{y_b}$. Hence, since we have chosen a $3$-set every variable in $\overline{z}$ must be in $\lcm(\overline{y_r},\overline{y_s},\overline{y_t})$. \\
     Now for $2$-dominance we must check both types of generator. First, $\overline{z}$ fails to divide any pair $\overline{y_r},\overline{y_s}$ since $x_{(r,s)}\notin \lcm (\overline{y_r},\overline{y_s})$ but $x_{(r,s)} \in \overline{z}$. Also, $\overline{y_r} \nmid \lcm(\overline{y_s},\overline{y_t})$ since again $x_{(s,t)}\notin \lcm (\overline{y_s},\overline{y_t})$ but $x_{(s,t)} \in \overline{y_r}$. Moreover, $\overline{y_r} \nmid \lcm(\overline{z},\overline{y_s})$ since $x_{(r,r)}\in \overline{y_r}$ and $x_{(r,r)}\notin \lcm(\overline{z},\overline{y_s})$.

     Since $k= 3$ and $d=2$, by \cref{thm:big-lyub} we also know that $\pd(S/M)=3$, $\opb_0(S/M)=1,\opb_1(S/M)=6,\opb_2(S/M)=15,\opb_3(S/M)=10$, and $\reg(S/M)=19$. It is also possible to extract the differential maps since the minimal resolution is explicitly given in \cref{thm:big-lyub}, but this is very tedious and so is not included here.
\end{exam}

\section{Construction of $(q,k,d)$-ideals}

How do we know that the results proven in the previous section describe an infinite class of monomial ideals? For which triples $(q,k,d)$ with $q,k,d \in \mathbb{N}$ and $1\leq d <k\leq q-1$ does there exist a monomial ideal $M$ such that $M$ has $q$ minimal generators, $\kappa(M) = k$, and $\delta(M)=d$? We prove that for any choice of $q,k,d$ in that range there exists a monomial ideal with those properties.

\begin{remark}
        For arbitrary $q \geq 3$ and $1 \leq d <k\leq q-1$, we will provide a construction method for a square-free, semidominant monomial ideal with the properties that: $M$ has $q$ minimal generators, $\delta(M)=d$ and $\kappa(M)=k$. If we view each generator as the set of its variables, notice that $d$-dominance means that the set of minimal generators must be $d$-cover-free. By $d$-coverfree, we mean that no generator should be a subset of any $d$-collection of other generators. For each generator that has a unique variable (the ``dominant" generators), this property is automatic. For the nondominant generator $m$, each $d$-collection of other generators must be without (must "miss") at least one variable in $m$. To guarantee $k$-divisibility we must have each variable in $m$ misses at most $k-1$ dominant generators. If $k$ is to be minimal, there must be a variable missing exactly $k-1$ other generators. Of course, the dominant generators must also each possess a private variable (by a ``private variable", we mean a variable that is used by exactly one generator). These ideas are illustrated by a construction.
\end{remark}

\begin{construct}
    We construct $M=(m,m_1,...,m_{q-1})$, with $M$ square-free and semidominant having $m$ as the nondominant element, which, for a fixed $k,d$ satisfying $1\leq d<k\leq q-1$, has $\delta(M)=d$ and $\kappa(M)=k$. Consider the $d$-subsets of $\{1,2,...,q-1\}$, which number $\binom{q-1}{d}$. For each of these subsets $A$, create a variable $x_A$. We require one more variable to ensure that $\kappa(M)=k$, that is, to ensure that $M$ is not $k-1$ divisible. This variable, say $z$, is given to $m$ and to each other generator except exactly $k-1$ of them. Call this set of $k-1$ generators $B$ (it can be chosen arbitrarily, as long as it is of that size). Then define $$m=z \cdot\prod_A x_A$$ And, for each $i \in B$, define: $$m_i=p_i \cdot \prod_{A:i \notin A}x_A$$ And for $j \notin B$: $$m_j=z\cdot p_j \cdot \prod_{A:j \notin A}x_A$$ This requires exactly $(q)+\binom{q-1}{d}$ variables. Each $d$-subset of $\{m_1,...,m_{q-1}\}$ misses some variable in $m$, hence $M$ satisfies $\delta(M)=d$. It is not possible that $M$ is $(d+1)$-dominant because you may choose one monomial with $z$, and then pick arbitrarily $d$ more (notice: $d+1\leq q-1$). Each variable in $m$ will appear, and hence $m$ will divide the lcm. Also $\kappa(M)=k$ because each variable in $m$ must appear in any $k$-collection of other minimal generators, and $M$ not $(k-1)$-divisible because you can choose the $k-1$ monomials which do not contain $z$, and then $m$ will not divide their lcm. And no $m_i$ can be a witness: since $k-1 \leq q-2$ there must be a $(k-1)$-subset of the generators that avoids $m_i$ with lcm not divisible by the private variable $p_i$, hence also not divisible by $m_i$. Lastly, obviously no generator may divide another, hence $q$ is the size of its minimal generating set. This completes the construction.
\end{construct}

\begin{exam}
    We will illustrate this method with an example. Consider: $(q,k,d)=(5,3,2)$, so $M=(m,m_1,m_2,m_3,m_4)$. First, each dominant monomial $m_1,...,m_4$ is given a private variable. Give these variables names: $p_i \in \supp(m_i)$. Now for each of the six $2$-subsets of $\{1,2,3,4\}$, we will create a variable. Consider $x_{\{1,2\}},x_{\{1,3\}},x_{\{1,4\}},x_{\{2,3\}},x_{\{2,4\}},x_{\{3,4\}}$. Then each variable $x_{\{i,j\}}$ is given to each $m_s$ with $i \neq s \neq j$. Finally we define the variable $z$, which we will use in $m$ and $2$ other generators, say $m_1$ and $m_2$. For ease of reading, we relabel the variables with set-subscripts. Define $(x_{\{1,2\}},x_{\{1,3\}},x_{\{1,4\}},x_{\{2,3\}},x_{\{2,4\}},x_{\{3,4\}})=(y_1,y_2,y_3,y_4,y_5,y_6)$, Our construction is then: $$M= (m,m_1,m_2,m_3,m_4) =(y_1y_2y_3y_4y_5y_6z,p_1y_4y_5y_6z,p_2y_2y_3y_6z,p_3y_1y_3y_5,p_4y_1y_2y_4)$$
    The monomial $y_1y_2y_3y_4y_5y_6z$ divides the lcm of any $3$ monomials from $$\{y_1y_2y_3y_4y_5y_6z,p_1y_4y_5y_6z,p_2y_2y_3y_6z,p_3y_1y_3y_5,p_4y_1y_2y_4\}$$ and no monomial divides the lcm of any $2$ others.
\end{exam}

\textbf{Acknowledgments:} We wish to recognize and thank the people who have been involved in various ways with this paper. In particular, thanks is due to Christopher Chin for the elegant construction used in example $3.15$, and Noah Ripke, Abigail Ho, and Phillip Yoon for helpful discussion. Our work was largely done at the California State University at Chico, so thanks is also due to the mathematics and statistics department there, especially to Drs. Kevin McGown and John Lind. This work was funded by NSF grant: $DMS-2244020$.


\begin{bibdiv}
\begin{biblist}

\bib{Ales1}{article}{
  author={Alesandroni, Guillermo},
  title={Structural Decomposition of Monomial Resolutions},
  journal={J. Algebra},
  volume={516},
  year={2018},
  pages={210--244},
  review={\MR{3864455}},
  doi={10.1016/j.jalgebra.2018.09.003}
}


\bib{Ales3}{article}{
  author={Alesandroni, Guillermo},
  title={Minimal resolutions of dominant and semidominant ideals},
  journal={J. Pure Appl. Algebra},
  volume={221},
  number={4},
  year={2017},
  pages={780--798},
  review={\MR{3585040}},
  doi={10.1016/j.jpaa.2016.08.002}
}



\bib{Merm}{collection.article}{
  author={Mermin, Jeff},
  booktitle={Progress in Commutative Algebra 1},
  title={Three Simplicial Resolutions},
  pages={127--141},
  publisher={De Gruyter},
  address={Berlin},
  year={2012},
  review={\MR{2931745}},
}


\bib{Peeva}{book}{
  author={Peeva, I.},
  title={Graded Syzygies},
  series={Algebra and Applications},
  volume={14},
  publisher={Springer},
  address={London},
  year={2010}
}

\end{biblist}
\end{bibdiv}
\end{document}